%
%
%

\documentclass{amsart}
\usepackage[all,ps,cmtip]{xy}
\usepackage{amsmath, amssymb}
\usepackage{amscd}
\usepackage{tikz,color}

\newtheorem{theorem}{Theorem}[section]
\newtheorem{lemma}[theorem]{Lemma}

\newtheorem{proposition}[theorem]{Proposition}
\newtheorem{conjecture}[theorem]{Conjecture}
\newtheorem{assumption}[theorem]{Assumption}

\theoremstyle{definition}
\newtheorem{definition}[theorem]{Definition}

\theoremstyle{remark}

\numberwithin{equation}{section}

\DeclareMathOperator{\rank}{rk}

\DeclareMathOperator{\Hom}{Hom}

\DeclareMathOperator{\Coh}{Coh}

\DeclareMathOperator{\D}{D}

\DeclareMathOperator{\ch}{ch}


\begin{document}
\title{Arithmetic genus of integral space curves}

\author{Hao Max Sun}

\address{Department of Mathematics, Shanghai Normal University, Shanghai 200234, People's Republic of China}

\email{hsun@shnu.edu.cn, hsunmath@gmail.com}


\subjclass[2000]{14H50, 14F05}

\date{June 6, 2016}

\keywords{Space curve, Arithmetic genus, Bridgeland stability,
Bogomolov-Gieseker inequality}

\begin{abstract}
We give an estimation for the arithmetic genus of an integral space
curve, which are not contained in a surface of degree $k-1$. Our
main technique is the Bogomolov-Gieseker type inequality for
$\mathbb{P}^3$ proved by Macr\`i.
\end{abstract}

\maketitle

\setcounter{tocdepth}{1}

\section{Introduction}
A classical problem, which goes back to Halphen \cite{H}, is to
determine, for given integers $d$ and $k$, the maximal genus
$G(d,k)$ of a smooth projective space curve of degree $d$ not
contained in a surface of degree $< k$. This problem is actually
very natural, and has been investigated by many people (see
\cite{Cast, GP, Har, Ha, HH}).

In this paper, we consider the same problem for an integral space
curve. Our main result is:

\begin{theorem}\label{main}
Let $C$ be an integral complex projective curve in $\mathbb{P}^3$ of
degree $d$. Let $p_a(C)$ be its arithmetic genus. If $C$ is not
contained in a surface of degree $< k$. Then
$$p_a(C)\leq
\left\{
\begin{array}{lcl}
\frac{2d^2}{3k}+\frac{1}{3}d(k-6)+1,  & &\mbox{if}~k^2<d,\\
&&\\
d(\sqrt{d}-2)+1, & &\mbox{if}~k^2\geq d.
\end{array}\right.
$$
\end{theorem}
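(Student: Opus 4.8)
The plan is to apply Macr\`i's Bogomolov--Gieseker inequality for $\mathbb{P}^3$ to the ideal sheaf $I_C$, viewed as an object of a tilted heart. Write $g=p_a(C)$; a Riemann--Roch computation gives $\ch(I_C)=(1,\,0,\,-d,\,2d+g-1)$ in the basis $(1,H,H^2,H^3)$. Recall the twisted Chern character $\ch^\beta=e^{-\beta H}\ch$, the tilt-slope $\nu_{\alpha,\beta}(E)=\bigl(\ch_2^\beta(E)-\tfrac{\alpha^2}{2}\ch_0^\beta(E)\bigr)/\ch_1^\beta(E)$, and the tilted abelian category $\Coh^\beta(\mathbb{P}^3)$, all defined for $\alpha>0$, $\beta<0$. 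Since $I_C$ is torsion-free of slope $0>\beta$ it lies in $\Coh^\beta(\mathbb{P}^3)$, and Macr\`i's theorem states that if $I_C$ is moreover $\nu_{\alpha,\beta}$-semistable with $\nu_{\alpha,\beta}(I_C)=0$, then $\ch_3^\beta(I_C)\le\tfrac{\alpha^2}{6}\ch_1^\beta(I_C)$.

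First comes the elementary computation. The condition $\nu_{\alpha,\beta}(I_C)=0$ cuts out the locus $\alpha^2=\beta^2-2d$, nonempty exactly for $\beta\le-\sqrt{2d}$; substituting $\ch^\beta(I_C)$ into Macr\`i's inequality and letting the $\beta^3$-terms cancel gives
\[
g\ \le\ 1-2d-\tfrac{2d}{3}\,\beta .
\]
As the right-hand side decreases in $\beta$, one wants $\beta$ as close to $0$ as the semistability of $I_C$ permits. The key geometric object is the numerical wall $W_k$ of $I_C$, where $\nu_{\alpha,\beta}(\mathcal{O}(-k))=\nu_{\alpha,\beta}(I_C)$: a short computation identifies $W_k$ with the semicircle whose diameter on the $\beta$-axis runs from $-\tfrac{2d}{k}$ to $-k$ (when $k^2<2d$), and shows that its topmost point $\bigl(\tfrac{|k^2-2d|}{2k},\,-\tfrac{k^2+2d}{2k}\bigr)$ lies precisely on $\{\nu_{\alpha,\beta}(I_C)=0\}$.

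The heart of the argument --- and the step I expect to be the main obstacle --- is to prove that $I_C$ is $\nu_{\alpha,\beta}$-semistable at the point one needs; concretely, that $I_C$ is $\nu_{\alpha,\beta}$-semistable at every point of $\{\nu_{\alpha,\beta}(I_C)=0\}$ lying on or outside $W_k$. For $\alpha\gg0$ this is automatic, since in the large-volume limit tilt-semistability of the torsion-free rank-one sheaf $I_C$ reduces to its Gieseker stability; the work is to control the walls as $\alpha$ decreases. A wall comes from an exact sequence $0\to F\to I_C\to Q\to 0$ in $\Coh^\beta(\mathbb{P}^3)$ along which $\nu_{\alpha,\beta}(F)=\nu_{\alpha,\beta}(I_C)$; since $F$ is a subsheaf of the rank-one torsion-free sheaf $I_C$ it has rank one, and combining the classical Bogomolov inequality $\ch_1(\,\cdot\,)^2\ge2\,\ch_0(\,\cdot\,)\ch_2(\,\cdot\,)$ for the tilt-semistable factors with the constraint that the wall pass through a point on or outside $W_k$ forces the reflexive hull of $F$ to be $\mathcal{O}(-a)$ for some $a\le k-1$. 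The degree-$a$ form cutting out that hull then gives a nonzero element of $H^0(I_C(a))$, i.e.\ a surface of degree $<k$ containing $C$, contradicting the hypothesis. Here integrality of $C$ is essential: it makes $I_C$ a prime ideal sheaf, which is exactly what is needed to upgrade the \emph{subsheaf} $F$ to an honest section of $I_C(a)$. Hence $I_C$ is $\nu_{\alpha,\beta}$-semistable where required.

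It remains to optimize $\beta$. If $k^2<d$, the topmost point of $W_k$, namely $\beta=-\tfrac{k^2+2d}{2k}\le-\sqrt{2d}$ with $\alpha=\tfrac{2d-k^2}{2k}>0$, is an admissible point of $\{\nu_{\alpha,\beta}(I_C)=0\}$, and the displayed inequality becomes
\[
g\ \le\ 1-2d+\tfrac{2d}{3}\Bigl(\tfrac{k}{2}+\tfrac{d}{k}\Bigr)=\frac{2d^2}{3k}+\tfrac13 d(k-6)+1 .
\]
If instead $k^2\ge d$, so $k\ge\sqrt d$, one uses the point $\bigl(\tfrac{\sqrt d}{2},\,-\tfrac32\sqrt d\bigr)$, which lies on $\{\nu_{\alpha,\beta}(I_C)=0\}$ because $\tfrac{9d}{4}-\tfrac{d}{4}=2d$; the role of $k\ge\sqrt d$ is to guarantee that the relevant wall does not separate this point from the large-volume region, so that the analysis of the previous paragraph applies (when $\sqrt d\le k\le 2\sqrt d$ one may alternatively use the topmost point of $W_k$, which yields the sharper bound $\tfrac{2d^2}{3k}+\tfrac13 d(k-6)+1\le d(\sqrt d-2)+1$). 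Plugging $\beta=-\tfrac32\sqrt d$ into the displayed inequality gives $g\le 1-2d+d\sqrt d=d(\sqrt d-2)+1$. In all cases the theorem follows.
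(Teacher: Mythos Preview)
Your overall strategy---tilt-stability of $\mathcal{I}_C$ plus Macr\`i's inequality---is exactly the paper's, and your optimization along the locus $\nu_{\alpha,\beta}(\mathcal{I}_C)=0$ is an equivalent repackaging of the paper's limit $\alpha\to 0$ at fixed $\beta$; both yield the same bound. The numerics are correct.

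The genuine gap is your rank-one claim. You write ``since $F$ is a subsheaf of the rank-one torsion-free sheaf $I_C$ it has rank one,'' but a subobject $F\hookrightarrow \mathcal{I}_C$ in $\Coh^{\beta}(\mathbb{P}^3)$ is only a \emph{sheaf}, not a subsheaf: the cohomology long exact sequence of $0\to F\to \mathcal{I}_C\to Q\to 0$ gives
\[
0\to H^{-1}(Q)\to F\to \mathcal{I}_C\to H^{0}(Q)\to 0,
\]
and $H^{-1}(Q)\in\mathcal{F}_\beta$ is a torsion-free sheaf that can have arbitrary positive rank. So $\ch_0(F)$ is not bounded a priori, and your subsequent reduction to $F^{**}=\mathcal{O}(-a)$ is unjustified. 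The paper closes this gap with Lemma~\ref{lemma1}: a quadratic estimate combining $\nu_{\alpha,\beta}(F)>\nu_{\alpha,\beta}(\mathcal{I}_C)$ with the Bogomolov inequality $\overline{\Delta}_H(F)\ge 0$ forces $\ch_0(F)<2$ as soon as $2\alpha^2+\beta^2\ge 4d$. That hypothesis holds along the paper's vertical lines $\beta=-2d/k$ (when $k^2<d$) and $\beta=-2\sqrt{d}$, but it \emph{fails} at your chosen points (e.g.\ at $(\alpha,\beta)=(\tfrac{\sqrt d}{2},-\tfrac{3}{2}\sqrt d)$ one gets $2\alpha^2+\beta^2=\tfrac{11d}{4}<4d$), so you cannot simply invoke it there; you would need a separate argument, for instance via nestedness of walls together with Lemma~\ref{lemma1} applied on the paper's vertical line.

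A second, smaller omission: once $\ch_0(F)=1$ and $F=\mathcal{I}_W(-l)$, you must still treat the case $C\subseteq W$, where no surface of degree $l$ is produced. The paper handles this by checking directly (Lemma~\ref{lemma2}) that such an $F$ satisfies $\nu_{\alpha_0,\beta_0}(F)\le\nu_{\alpha_0,\beta_0}(\mathcal{I}_C)$ because then $\ch_2(\mathcal{I}_W)\le -d$.
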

For the case of $k\leq2$, this inequality has also been obtained by
Macr\`i \cite[Corollary 4.1]{Mac2}. When $k^2<d$, our bound is
weaker than that of Castelnuovo, Harris and Gruson-Peskine for
smooth space curves, but still non-trivial. Our bound can be reached
in some cases, when $k^2\geq d$. For example, the arithmetic genus
of a complete intersection of two surfaces of degree $k$ is
$$k^2(k-2)+1=d(\sqrt{d}-2)+1.$$

The idea of the proof of Theorem \ref{main} is to establish the
tilt-stability of $\mathcal{I}_C$ via computing its walls, then the
Bogomolov-Gieseker type inequality for $\mathbb{P}^3$ proved by
Macr\`i \cite{Mac2} implies Theorem \ref{main}. This
Bogomolov-Gieseker type inequality naturally appears in the
construction of Bridgeland stability conditions on threefolds (cf.
\cite{Bri1, BMT, BMS}).  There are also some other interesting
applications of the Bogomolov-Gieseker type inequality in
\cite{BBMT} and \cite{Toda}.

Our tilt-stability of $\mathcal{I}_C$ can also gives a version of
the Halphen Speciality Theorem:

\begin{theorem}\label{main2}
Let $C\subset \mathbb{P}^3$ be an integral complex projective degree
$d$ curve not contained in any surface of degree $<k$. Then
$h^2(\mathcal{I}_C(l))=h^1(\mathcal{O}_C(l))=0$, if
$l>\frac{2d}{k}-4$ when $k^2<d$, or $l>2\sqrt{d}-4$ when $k^2\geq
d$.
\end{theorem}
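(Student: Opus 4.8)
The plan is to derive Theorem \ref{main2} from the tilt-stability of $\mathcal{I}_C$ established in the course of proving Theorem \ref{main}, by combining Serre duality with a comparison of tilt-slopes. First a reduction: twisting the ideal sheaf sequence $0\to\mathcal{I}_C\to\mathcal{O}_{\mathbb{P}^3}\to\mathcal{O}_C\to0$ by $\mathcal{O}_{\mathbb{P}^3}(l)$ and using $H^1(\mathbb{P}^3,\mathcal{O}_{\mathbb{P}^3}(l))=H^2(\mathbb{P}^3,\mathcal{O}_{\mathbb{P}^3}(l))=0$ for every $l$, the long exact cohomology sequence gives $h^1(\mathcal{O}_C(l))=h^2(\mathcal{I}_C(l))$, so it suffices to prove $h^2(\mathcal{I}_C(l))=0$ in the asserted range. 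By Serre duality on $\mathbb{P}^3$,
$$h^2(\mathcal{I}_C(l))=\dim\Ext^1\bigl(\mathcal{I}_C(l),\mathcal{O}_{\mathbb{P}^3}(-4)\bigr)=\dim\Hom_{\D^b(\mathbb{P}^3)}\bigl(\mathcal{I}_C,\mathcal{O}_{\mathbb{P}^3}(-4-l)[1]\bigr),$$
so everything reduces to showing this last $\Hom$-space vanishes.

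I would work inside a tilted heart $\Coh^{\beta}(\mathbb{P}^3)$, with tilt-slope $\nu_{\alpha,\beta}$, for a well-chosen pair $(\alpha,\beta)$ satisfying $-4-l<\beta<0$. For such $\beta$ one has $\mathcal{I}_C\in\Coh^{\beta}(\mathbb{P}^3)$ (it is torsion-free of $\mu$-slope $0>\beta$) and $\mathcal{O}_{\mathbb{P}^3}(-4-l)[1]\in\Coh^{\beta}(\mathbb{P}^3)$ (since $-4-l\le\beta$). Both objects are $\nu_{\alpha,\beta}$-semistable: $\mathcal{I}_C$ by the tilt-stability proven for Theorem \ref{main}, and shifts of line bundles are always tilt-stable. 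Since a nonzero morphism between $\nu_{\alpha,\beta}$-semistable objects of the heart cannot decrease the tilt-slope, we get $\Hom_{\D^b(\mathbb{P}^3)}(\mathcal{I}_C,\mathcal{O}_{\mathbb{P}^3}(-4-l)[1])=0$ as soon as $\nu_{\alpha,\beta}(\mathcal{I}_C)>\nu_{\alpha,\beta}(\mathcal{O}_{\mathbb{P}^3}(-4-l)[1])$. Using $\ch(\mathcal{I}_C)=(1,0,-d,\ast)$ and $\ch(\mathcal{O}_{\mathbb{P}^3}(m))=(1,m,\tfrac{m^2}{2},\tfrac{m^3}{6})$, and setting $a=-\beta$, $b=\beta+4+l$ (so $a,b>0$ and $a+b=l+4$), a direct computation rewrites this slope inequality as
$$(l+4)\,(ab-\alpha^2)>2db,\qquad\text{equivalently}\qquad \alpha^2<b\Bigl(a-\tfrac{2d}{l+4}\Bigr),$$
which in particular forces $a>2d/(l+4)$ and conversely holds for all sufficiently small $\alpha>0$ once $a>2d/(l+4)$.

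Thus the proof comes down to: for which $l$ can one choose $a\in(0,l+4)$ with $a>2d/(l+4)$ and a small $\alpha>0$ such that $(\alpha,-a)$ lies in the region where $\mathcal{I}_C$ is tilt-semistable? This last condition is exactly the output of the wall computation performed for Theorem \ref{main}: the hypothesis that $C$ lies on no surface of degree $<k$ bounds the largest wall of $\mathcal{I}_C$, and the two regimes $k^2<d$ and $k^2\ge d$ appearing there are precisely what produce the two cases here. When $k^2<d$ the admissible region allows $\beta\to-k$ (i.e. $a\to k$) with $\alpha\to0^+$, and $a>2d/(l+4)$ becomes $l>\tfrac{2d}{k}-4$; when $k^2\ge d$ the largest wall obstructs pushing $a$ that far, the extremal admissible choice is $a\to\sqrt d$ with $\alpha\to0^+$, and $a>2d/(l+4)$ becomes $l>2\sqrt d-4$. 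Feeding these choices into the displayed inequality and letting $\alpha\to0^+$ gives the two stated bounds, and the vanishing $h^1(\mathcal{O}_C(l))=h^2(\mathcal{I}_C(l))=0$ follows. The one genuinely delicate ingredient is knowing exactly how far toward the $\beta$-axis one may go while keeping $\mathcal{I}_C$ tilt-semistable; but that is already supplied by the proof of Theorem \ref{main}, so the only remaining work is the routine slope bookkeeping above.
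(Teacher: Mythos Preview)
Your overall strategy is exactly the paper's: reduce to the vanishing of $\Hom_{\D^b(\mathbb{P}^3)}(\mathcal{I}_C,\mathcal{O}_{\mathbb{P}^3}(-4-l)[1])$ via Serre duality, place both objects in a tilted heart, and compare tilt-slopes of two $\nu_{\alpha,\beta}$-semistable objects. Your slope computation $(l+4)(ab-\alpha^2)>2db$, equivalently $\alpha^2<b\bigl(a-\tfrac{2d}{l+4}\bigr)$, is correct.

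The one genuine gap is in the values of $\beta$ you invoke. What the proof of Theorem~\ref{main} actually supplies (Propositions~\ref{prop1} and~\ref{prop2}) is tilt-semistability of $\mathcal{I}_C$ along the vertical lines $\beta=-\tfrac{2d}{k}$ (when $k^2<d$) and $\beta=-2\sqrt d$ (when $k^2\ge d$), for \emph{all} $\alpha>0$; it does \emph{not} establish semistability near $(\alpha,\beta)=(0,-k)$ or $(0,-\sqrt d)$ as you claim. Indeed, Lemma~\ref{lemma1}, which forces any destabilizer to have rank one, requires $2\alpha^2+\beta^2\ge 4d$, and this fails at those points, so nothing in the paper rules out higher-rank walls there. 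The reason you still land on the correct numerical bounds is a symmetry: $-k$ and $-\tfrac{2d}{k}$ are the two $\beta$-intercepts of the \emph{same} numerical wall $\alpha^2+(\beta+k)(\beta+\tfrac{2d}{k})=0$ coming from $\mathcal{O}(-k)$, and your inequality is invariant under swapping them.

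With the correct choices $a=\tfrac{2d}{k}$ (resp.\ $a=2\sqrt d$), the binding constraint in $\alpha^2<b\bigl(a-\tfrac{2d}{l+4}\bigr)$ becomes $b=l+4-a>0$, i.e.\ $l>\tfrac{2d}{k}-4$ (resp.\ $l>2\sqrt d-4$), after which any sufficiently small $\alpha_0>0$ works. That is precisely the paper's argument.
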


Our paper is organized as follows. In Section \ref{S2}, we review
basic properties of tilt-stability, the conjectural inequality
proposed in \cite{BMT, BMS} and variants of the classical
Bogomolov-Gieseker inequality satisfies by tilt-stable objects. Then
in Section \ref{S3} the tilt-stability of $\mathcal{I}_C$ has been
established via computing its walls. Finally, we show the proof of
Theorem \ref{main} and \ref{main2} in Section \ref{S4}.

\subsection*{Notation}
In this paper, we will always denote by $C$ an integral projective
curve in the three dimensional complex projective space
$\mathbb{P}^3$ and by $\mathcal{I}_C$ its ideal sheaf in
$\mathbb{P}^3$. We let $p_a(C):=h^1(C, \mathcal{O}_C)$ be the
arithmetic genus of $C$. By $X$ we denote a complex smooth
projective threefold and by $\D^b(X)$ its bounded derived category
of coherent sheaves.

\subsection*{Acknowledgments}
The author would like to thank Emanuele Macr\`i for useful
discussions.

\section{Preliminaries}\label{S2}
In this section, we review the notion of tilt-stability for
threefolds introduced in \cite{BMT, BMS}. Then we recall the
Bogomolov-Gieseker type inequality for tilt-stable complexes
proposed there.

Let $X$ be a smooth projective threefold over $\mathbb{C}$, and let
$H$ be an ample divisor on $X$. Let $\alpha>0$ and $\beta$ be two
real numbers. We write $\ch^{\beta}(E)=e^{-\beta H}\ch(E)$ denotes
the Chern character twisted by $\beta H$. More explicitly, we have
\begin{eqnarray*}
\begin{array}{lcl}
\ch^{\beta}_0=\ch_0=\rank  && \ch^{\beta}_2=\ch_2-\beta H\ch_1+\frac{\beta^2}{2}H^2\ch_0\\
&&\\
\ch^{\beta}_1=\ch_1-\beta H\ch_0 && \ch^{\beta}_3=\ch_3-\beta
H\ch_2+\frac{\beta^2}{2}H^2\ch_1-\frac{\beta^3}{6}H^3\ch_0.
\end{array}
\end{eqnarray*}

\subsection*{Slope-stability}
We define the slope $\mu_{\beta}$ of a coherent sheaf $E\in \Coh(X)$
by
\begin{eqnarray*}
\mu_{\beta}(E)= \left\{
\begin{array}{lcl}
+\infty,  & &\mbox{if}~\ch^\beta_0(E)=0,\\
&&\\
\frac{H^2\ch_1^{\beta}(E)}{H^3\ch_0^{\beta}(E)}, &
&\mbox{otherwise},
\end{array}\right.
\end{eqnarray*}

\begin{definition}
A coherent sheaf $E$ on $X$ is slope-(semi)stable (or
$\mu_{\beta}$-(semi)stable) if, for all non-zero subsheaves
$F\hookrightarrow E$, we have
$$\mu_{\beta}(F)<(\leq)\mu_{\beta}(E/F).$$
\end{definition}
Harder-Narasimhan filtrations (HN-filtrations, for short) with
respect to slope-stability exist in $\Coh(X)$: given a non-zero
sheaf $E\in\Coh(X)$, there is a filtration
$$0=E_0\subset E_1\subset\cdots\subset E_n=E$$
such that: $G_i:=E_i/E_{i-1}$ is slope-semistable, and
$\mu_{\beta}(G_1)>\cdots>\mu_{\beta}(G_n)$. We set
$\mu^+_{\beta}(E):=\mu_{\beta}(G_1)$ and
$\mu^-_{\beta}(E):=\mu_{\beta}(G_n)$.

\subsection*{Tilt-stability}
There exists a \emph{torsion pair}
$(\mathcal{T}_{\beta},\mathcal{F}_{\beta})$ in $\Coh(X)$ defined as
follows:
\begin{eqnarray*}
\mathcal{T}_{\beta}=\{E\in\Coh(X):\mu^-_{\beta}(E)>0 \}\\
\mathcal{F}_{\beta}=\{E\in\Coh(X):\mu^+_{\beta}(E)\leq0 \}
\end{eqnarray*}
Equivalently, $\mathcal{T}_{\beta}$ and $\mathcal{F}_{\beta}$ are
the extension-closed subcategories of $\Coh(X)$ generated by
slope-stable sheaves of positive and non-positive slope,
respectively.

\begin{definition}
We let $\Coh^{\beta}(X)\subset \D^b(X)$ be the extension-closure
$$\Coh^{\beta}(X)=\langle\mathcal{T}_{\beta}, \mathcal{F}_{
\beta}[1]\rangle.$$
\end{definition}

By the general theory of torsion pairs and tilting \cite{HRS},
$\Coh^{\beta}(X)$ is the heart of a bounded t-structure on
$\D^b(X)$; in particular, it is an abelian category.

Now we can define the following slope function on $\Coh^{
\beta}(X)$: for an object $E\in\Coh^{\beta}(X)$, we set

\begin{eqnarray*}
\nu_{\alpha, \beta}(E)= \left\{
\begin{array}{lcl}
+\infty,  & &\mbox{if}~H^2\ch^{\beta}_1(E)=0,\\
&&\\
\frac{H\ch_2^{\beta}(E)-\frac{1}{2}\alpha^2H^3\ch^{\beta}_0(E)}{H^2\ch^{\beta}_1(E)},
& &\mbox{otherwise}.
\end{array}\right.
\end{eqnarray*}

\begin{definition}
An object $E\in\Coh^{\beta}(X)$ is \emph{tilt-(semi)stable} (or
$\nu_{\alpha,\beta}$-\emph{(semi)stable}) if, for all non-trivial
subobjects $F\hookrightarrow E$, we have
$$\nu_{\alpha, \beta}(F)<(\leq)\nu_{\alpha, \beta}(E/F).$$
\end{definition}

Lemma 3.2.4 in \cite{BMT} shows that the Harder-Narasimhan property
holds with respect to $\nu_{\alpha,\beta}$-stability, i.e., for any
$\mathcal{E}\in\Coh^{\beta}(X)$ there is a filtration in
$\Coh^{\beta}(X)$
$$0=\mathcal{E}_0\subset \mathcal{E}_1\subset\cdots\subset \mathcal{E}_n=\mathcal{E}$$ such that:
$\mathcal{F}_i:=\mathcal{E}_i/\mathcal{E}_{i-1}$ is
$\nu_{\alpha,\beta}$-semistable with
$\nu_{\alpha,\beta}(\mathcal{F}_1)>\cdots>\nu_{\alpha,\beta}(\mathcal{F}_n)$.

\begin{definition}
In the above filtration, we call $\mathcal{E}_1$ the
$\nu_{\alpha,\beta}$-maximal subobject of
$\mathcal{E}\in\Coh^{\beta}(X)$. If $\mathcal{E}$ is
$\nu_{\alpha,\beta}$-semistable, we say $\mathcal{E}$ itself to be
its $\nu_{\alpha,\beta}$-maximal subobject.
\end{definition}

\subsection*{Bogomolov-Gieseker type inequality}We now recall the
Bogomolov-Gieseker type inequality for tilt-stable complexes
proposed in \cite{BMT, BMS}.
\begin{definition}
We define the generalized discriminant
$$\overline{\Delta}^{\beta}_H:=(H^2\ch^{\beta}_1)^2-2H^3\ch^{\beta}_0\cdot(H\ch^{\beta}_2).$$
\end{definition}
A short calculation shows
$\overline{\Delta}^{\beta}_H=(H^2\ch_1)^2-2H^3\ch_0\cdot(H\ch_2)$.
Hence the generalized discriminant is independent of $\beta$.

\begin{theorem}[{\cite[Theorem 7.3.1]{BMT}}]\label{2.6}
Assume $E\in\Coh^{\beta}(X)$ is $\nu_{\alpha,\beta}$-semistable.
Then
\begin{equation}\label{BG}
\overline{\Delta}^{\beta}_H(E)\geq0.
\end{equation}
\end{theorem}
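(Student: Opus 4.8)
\textbf{Proof proposal for Theorem \ref{2.6}.}

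The plan is to deduce the inequality from the classical Bogomolov--Gieseker inequality for slope-semistable sheaves by tracking how the generalized discriminant behaves under passage through the tilted heart $\Coh^\beta(X)$. First I would reduce to the case where $E$ is $\nu_{\alpha,\beta}$-\emph{stable}: a semistable object is an iterated extension of stable ones of the same slope, and since $\overline\Delta^\beta_H$ is a quadratic form in the (real and imaginary parts of the) central charge, one checks that it is preserved, or only increased, under extensions of objects with equal slope (this is the standard "the discriminant of a sum dominates the sum of the discriminants when the slopes agree" computation). So it suffices to treat a $\nu_{\alpha,\beta}$-stable $E$.

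Next I would run the now-standard cohomology argument. Write the two cohomology sheaves of $E$ with respect to the standard heart $\Coh(X)$, namely $\mathcal H^{-1}(E)\in\mathcal F_\beta$ and $\mathcal H^0(E)\in\mathcal T_\beta$, which sit in a triangle $\mathcal H^{-1}(E)[1]\to E\to \mathcal H^0(E)$ inside $\Coh^\beta(X)$. The key point is that $\overline\Delta^\beta_H$ is a signature-$(1,-1)$ quadratic form (equivalently $-\overline\Delta^\beta_H$ is a bilinear form of a type satisfying a reverse Cauchy--Schwarz / Hodge-index-type inequality on the cone of classes actually appearing), so that if one has two classes $v_1,v_2$ with $\overline\Delta^\beta_H(v_i)\geq 0$ and with the relevant coordinates of the same sign, then $\overline\Delta^\beta_H(v_1+v_2)\geq 0$ as well. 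Applying the classical Bogomolov--Gieseker inequality to the slope-semistable Jordan--Hölder factors of $\mathcal H^0(E)$ and of $\mathcal H^{-1}(E)$ gives $\overline\Delta^\beta_H\geq 0$ for each factor; combining them via this convexity/Hodge-index property yields $\overline\Delta^\beta_H(E)\geq 0$, using that $\overline\Delta^\beta_H$ is additive in $\ch$ up to the cross term which has the right sign.

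The main obstacle is verifying the sign/convexity property precisely, i.e. that the discriminant really does add up the right way for the particular classes that arise. This is where one must use that $E$, being in $\Coh^\beta(X)$, has $H^2\ch_1^\beta(E)\geq 0$, and that the factors of $\mathcal H^{-1}(E)[1]$ have $H^2\ch_1^\beta\leq 0$ while those of $\mathcal H^0(E)$ have $H^2\ch_1^\beta\geq 0$; one then has to handle the borderline cases where $H^2\ch_1^\beta$ vanishes (so that $\nu_{\alpha,\beta}=+\infty$), arguing that such subobjects/quotients cannot destabilize and their contribution to $\overline\Delta^\beta_H$ is harmless. A secondary technical point is that the statement should really be checked for \emph{all} $\alpha>0$ simultaneously — the family of inequalities as $\alpha$ varies is what makes the wall-and-chamber bookkeeping in Section \ref{S3} work — but since $\overline\Delta^\beta_H$ itself does not involve $\alpha$, once it is established for one stability chamber it holds identically; the role of $\alpha$ is only to guarantee enough (semi)stable objects exist, via the support property. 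I would refer to \cite[Theorem 7.3.1]{BMT} for the full details, as the argument above is exactly the proof given there.
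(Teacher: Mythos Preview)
The paper does not prove this statement at all; it simply quotes it from \cite[Theorem~7.3.1]{BMT} and uses it as a black box in the proof of Lemma~\ref{lemma1}. So there is no ``paper's own proof'' to compare against, only the original argument in \cite{BMT}.

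Your sketch has the right ingredients---the classical Bogomolov--Gieseker inequality for $\mu$-semistable sheaves together with the signature property of $\overline{\Delta}^\beta_H$---and your first reduction (semistable to stable via same-slope extensions) is exactly right. However, the second step, passing to the $\mu$-HN factors of $\mathcal H^{-1}(E)$ and $\mathcal H^0(E)$ and then ``adding up'' their discriminants, does not work as stated. Those factors are \emph{not} subquotients of $E$ in $\Coh^\beta(X)$ and do not share a common $\nu_{\alpha,\beta}$-slope, so the Hodge-index-type convexity you invoke is unavailable: without a common slope one can easily produce classes $v_1,v_2$ with $\overline{\Delta}^\beta_H(v_i)\ge 0$, $H^2\ch_1^\beta(v_i)\ge 0$, yet $\overline{\Delta}^\beta_H(v_1+v_2)<0$ (e.g.\ $v_1=(0,1,1)$, $v_2=(1,0,0)$ at $\beta=0$). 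Tilt-stability of $E$ must be used more substantially than you indicate to exclude such combinations, and your outline does not say how.

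The argument actually given in \cite{BMT} is organized differently: one first shows that for $\alpha\gg 0$ the $\nu_{\alpha,\beta}$-semistable objects are, up to shift, $\mu$-semistable sheaves (the ``large volume limit''), where the classical inequality applies directly; one then lets $\alpha$ decrease and crosses walls. At each wall $E$ acquires a Jordan--H\"older filtration by objects of the \emph{same} $\nu_{\alpha,\beta}$-slope which are semistable on the far side, and for those the same-slope convexity of $\overline{\Delta}^\beta_H$ (your first reduction) genuinely applies. This wall-crossing induction is the missing mechanism; your claim that the cohomology-sheaf argument ``is exactly the proof given there'' is not accurate.
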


\begin{conjecture}[{\cite[Conjecture 4.1]{BMS}}]
Assume $E\in\Coh^{\beta}(X)$ is $\nu_{\alpha,\beta}$-semistable.
Then
\begin{equation}\label{BG}
\alpha^2\overline{\Delta}^{\beta}_H(E)+4\left(H\ch^{\beta}_2(E)\right)^2-6H^2\ch^{\beta}_1(E)\ch^{\beta}_3(E)\geq0.
\end{equation}
\end{conjecture}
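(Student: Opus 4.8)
In full generality this is the conjecture of Bayer--Macr\`i--Stellari \cite{BMS} (see also \cite{BMT}), which is still open; the only instance needed in this paper is $X=\mathbb{P}^3$ with $H$ the hyperplane class, where it is a theorem of Macr\`i \cite{Mac2}, and the plan is to follow his strategy.

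The first step is to exploit the symmetries of the asserted inequality. Tensoring by $\mathcal{O}_{\mathbb{P}^3}(n)$ carries a $\nu_{\alpha,\beta}$-semistable object to a $\nu_{\alpha,\beta+n}$-semistable one and leaves the left-hand side unchanged (because $\ch^{\beta+n}(E(n))=\ch^{\beta}(E)$ componentwise), while the derived dual composed with a shift interchanges $\nu_{\alpha,\beta}$-semistability with $\nu_{\alpha,-\beta}$-semistability and again preserves the expression. Using these one reduces $\beta$ to a fixed fundamental interval and, after a normalisation in $\alpha$, to tilt-semistable objects that are minimal, i.e. become $\nu_{\alpha,\beta}$-semistable of slope $0$ for $(\alpha,\beta)$ in the relevant region; by Theorem \ref{2.6} one may also assume the Chern character satisfies $\overline{\Delta}^{\beta}_H\geq 0$.

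The core input is the existence of a ``quiver region'' in the $(\alpha,\beta)$-plane, special to $\mathbb{P}^3$. Starting from Beilinson's strong full exceptional collection $(\mathcal{O},\mathcal{O}(1),\mathcal{O}(2),\mathcal{O}(3))$ and performing one further tilt of $\Coh^{\beta}(\mathbb{P}^3)$ with respect to $\nu_{\alpha,\beta}$, one shows that for $(\alpha,\beta)$ near a suitable corner the resulting heart is equivalent to the category of finite-dimensional representations of the associated Beilinson quiver with relations, under which $\nu_{\alpha,\beta}$-stability becomes a King-type stability. For objects in this region the inequality \eqref{BG} becomes a finite combinatorial assertion about dimension vectors and is verified directly; the bundles occurring in the Euler sequence on $\mathbb{P}^3$ enter precisely here, and this is where the sharp constant $6$ is produced.

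Finally the inequality is propagated over the whole $(\alpha,\beta)$-plane by a Bridgeland-type wall-crossing argument, by contradiction. Fix a Chern character $v$ with $\overline{\Delta}^{\beta}_H(v)\geq 0$; the set of parameters at which some $\nu_{\alpha,\beta}$-semistable object of class $v$ violates \eqref{BG} is open, so a violation would persist up to a wall for $v$. On each of the finitely many such walls the Jordan--H\"older factors have strictly smaller discriminant or strictly smaller rank, hence satisfy \eqref{BG} by induction; a computation then forces $v$ to satisfy it as well, unless the violating object can be moved either into the quiver region treated above or into the large-volume chamber $\alpha\to\infty$, where \eqref{BG} follows at once from $\overline{\Delta}^{\beta}_H\geq 0$ and the sign of $H\ch_2^{\beta}$. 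The main obstacle is exactly this last step: one must control the minimiser of the left-hand side of \eqref{BG} along wall-crossings so that it can always be transported, without increasing that left-hand side, to one of the two regions where the inequality is already known, and one must dispose by explicit case analysis of the low-rank boundary objects peculiar to $\mathbb{P}^3$ — line bundles and their twists, ideal sheaves of points and of lines, and the (co)tangent-type bundles — which sit on the borders of these regions. Since the constant $6$ reflects $\mathbb{P}^3$-specific geometry, no purely formal positivity argument can replace this analysis.
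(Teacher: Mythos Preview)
The paper does not prove this statement at all: it is recorded there precisely as a conjecture, quoted from \cite{BMS}, and the only case used---$X=\mathbb{P}^3$---is immediately afterwards stated as Theorem~\ref{*} with a bare citation to Macr\`i \cite{Mac2}. You correctly identify this situation in your opening sentence, so in that sense your proposal matches the paper exactly: neither you nor the paper attempts to prove the general conjecture, and for $\mathbb{P}^3$ the paper simply invokes \cite{Mac2} as a black box.

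Everything after your first sentence is therefore extra relative to the paper. Your outline of Macr\`i's argument---reduction by twist and duality symmetries, the quiver region coming from Beilinson's exceptional collection on $\mathbb{P}^3$, and propagation by wall-crossing with an induction on discriminant/rank---is a fair high-level summary of \cite{Mac2}, but it remains a sketch rather than a proof: the delicate steps (controlling how a hypothetical violator moves under wall-crossing, and the explicit verification in the quiver region) are named but not carried out. For the purposes of this paper that is fine, since the paper itself never reproduces Macr\`i's proof; just be aware that what you have written is an expository summary of \cite{Mac2}, not an independent argument, and the general statement over an arbitrary smooth projective threefold remains open.
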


Such an inequality was proved by Macr\`i \cite{Mac2} in the case of
the projective space $\mathbb{P}^3$:
\begin{theorem}\label{*}
The inequality (\ref{BG}) holds for $\nu_{\alpha,\beta}$-semistable
objects in $\D^b(\mathbb{P}^3)$.
\end{theorem}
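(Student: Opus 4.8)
The plan is to argue by induction on the generalized discriminant $\overline{\Delta}^{\beta}_H(E)$. By Theorem \ref{2.6} this quantity is a nonnegative integer for every $\nu_{\alpha,\beta}$-semistable object, and, as noted above, it does not depend on $\beta$; so it is a legitimate induction parameter. Write
$$Q_{\alpha,\beta}(E):=\alpha^2\overline{\Delta}^{\beta}_H(E)+4\bigl(H\ch^{\beta}_2(E)\bigr)^2-6H^2\ch^{\beta}_1(E)\ch^{\beta}_3(E),$$
so that the assertion is $Q_{\alpha,\beta}(E)\geq 0$ for all $\nu_{\alpha,\beta}$-semistable $E\in\Coh^{\beta}(\mathbb{P}^3)$. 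Two reductions are available at the start. First, the operations $E\mapsto E\otimes\mathcal{O}_{\mathbb{P}^3}(1)$ and the derived dual $E\mapsto E^{\vee}[n]$ (for a suitable shift $n$) send $\nu_{\alpha,\beta}$-semistable objects to $\nu_{\alpha,\beta+1}$- and $\nu_{\alpha,-\beta}$-semistable objects respectively and transform $Q$ equivariantly; hence it suffices to treat $\beta$ in a fixed half-open interval of length $\tfrac{1}{2}$. Second, since $\overline{\Delta}^{\beta}_H(E)\geq 0$ and the other two terms of $Q$ are independent of $\alpha$, the function $\alpha\mapsto Q_{\alpha,\beta}(E)$ is nondecreasing; so along the vertical ray $\beta=$ const it is enough to prove $Q_{\alpha,\beta}(E)\geq 0$ at the infimum $\alpha_{0}\geq 0$ of the values of $\alpha$ for which $E$ is $\nu_{\alpha,\beta}$-semistable.

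Suppose first $\alpha_0>0$. Then $E$ is strictly $\nu_{\alpha_0,\beta}$-semistable and has a Jordan--H\"older filtration with factors $E_1,\dots,E_m$ of equal tilt-slope. A sharper version of Theorem \ref{2.6} from \cite{BMT} gives $\sum_i\overline{\Delta}^{\beta}_H(E_i)\leq\overline{\Delta}^{\beta}_H(E)$; together with the description of vanishing-discriminant objects recalled below, this reduces matters to factors of strictly smaller discriminant, to which the inductive hypothesis applies, yielding $Q_{\alpha_0,\beta}(E_i)\geq 0$. Because the $E_i$ sit on one wall, their truncated Chern characters $\bigl(H^2\ch^{\beta}_1,\;H\ch^{\beta}_2-\tfrac{1}{2}\alpha_0^{2}H^3\ch^{\beta}_0\bigr)$ are pairwise proportional, and then an elementary expansion of $Q_{\alpha_0,\beta}$ along the filtration shows the bilinear cross-terms are nonnegative, so $Q_{\alpha_0,\beta}(E)\geq 0$. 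The base case $\overline{\Delta}^{\beta}_H(E)=0$ rests on the classification of tilt-semistable objects of zero discriminant on $\mathbb{P}^3$: every such object is, up to shift, an iterated self-extension of a single line bundle $\mathcal{O}_{\mathbb{P}^3}(k)$, on which a direct computation gives $Q_{\alpha,\beta}\equiv 0$, the cross-terms again vanishing by proportionality.

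The essential case is $\alpha_0=0$, i.e. $E$ remains $\nu_{\alpha,\beta}$-semistable for all small $\alpha>0$; here one must establish the limiting inequality $4\bigl(H\ch^{\beta}_2(E)\bigr)^2\geq 6H^2\ch^{\beta}_1(E)\ch^{\beta}_3(E)$. This is where the geometry of $\mathbb{P}^3$ enters, through its full strong exceptional collection $\langle\mathcal{O}_{\mathbb{P}^3},\mathcal{O}_{\mathbb{P}^3}(1),\mathcal{O}_{\mathbb{P}^3}(2),\mathcal{O}_{\mathbb{P}^3}(3)\rangle$. For $\beta$ in the chosen fundamental interval one compares $\nu_{\alpha,\beta}$-stability, as $\alpha\to 0^{+}$, with the stability induced on the heart $\mathcal{A}$ of the bounded t-structure attached to a suitable mutation of this collection --- equivalently, with the category of finite-dimensional representations of the corresponding Beilinson quiver with relations. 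One shows that an object which stays $\nu_{\alpha,\beta}$-semistable for all small $\alpha$ is either a shift of one of the exceptional line bundles (already handled) or lies in $\mathcal{A}$; and for objects of $\mathcal{A}$ the Chern character is pinned down by the dimension vector, so that $4(H\ch^{\beta}_2)^2-6H^2\ch^{\beta}_1\ch^{\beta}_3$ becomes an explicit quadratic form in the dimension vector that one verifies to be nonnegative on every effective vector. I expect this last step --- matching the two stability notions precisely near the boundary of the geometric chamber, controlling which objects survive the limit $\alpha\to 0^{+}$, and discharging the resulting finite list of quadratic inequalities --- to be the main obstacle; it is the content of Macr\`i's argument in \cite{Mac2}. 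Granting it, the induction closes and Theorem \ref{*} follows.
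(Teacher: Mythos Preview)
The paper does not give its own proof of Theorem~\ref{*}: the statement is quoted in Section~\ref{S2} as a result of Macr\`i \cite{Mac2}, with no argument supplied. So there is no in-paper proof to compare your proposal against.

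That said, your sketch is a fair outline of the strategy in \cite{Mac2}: the reduction to a fundamental domain in $\beta$ via tensoring by $\mathcal{O}(1)$ and duality, the monotonicity of $Q_{\alpha,\beta}$ in $\alpha$, the wall-crossing/Jordan--H\"older step at a finite $\alpha_0$ with induction on the (integral) discriminant, and the boundary analysis at $\alpha_0=0$ using the full strong exceptional collection on $\mathbb{P}^3$ and the associated quiver heart. You correctly flag the last step as the substantive one; in Macr\`i's paper this is carried out by identifying the limiting tilt heart with a quiver category and checking a finite list of Euler-form inequalities, which you leave as a black box. One small caution: your description of the $\overline{\Delta}=0$ case (``iterated self-extensions of a single line bundle'') omits torsion objects supported in dimension $\leq 1$, which also have vanishing discriminant and must be handled separately. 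Apart from that, the outline is consistent with the cited proof, though of course it is only a roadmap rather than a proof.
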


\section{Tilt-stability of ideal sheaves of space curves}\label{S3}
In this section, we establish the tilt-stability of ideal sheaves of
spaces curves via computing their walls. Then from Theorem
(\ref{*}), we can deduce a Castelnuovo type inequality for integral
curves in $\mathbb{P}^3$.

Throughout this section, let $C$ be an integral projective curve in
$\mathbb{P}^3$ of degree $d$ not contained in a surface of degree $<
k$, and let $\mathcal{I}_C$ be the ideal sheaf of $C$ in
$\mathbb{P}^3$. We keep the same notation as that in the previous
section for $X=\mathbb{P}^3$ and $H= \mbox{a plane of}~
\mathbb{P}^3$. To simplify, we directly identify
$H^{3-i}\ch^{\beta}_i(E)=\ch^{\beta}_i(E)$ for
$E\in\D^b(\mathbb{P}^3)$. The tilted slope becomes:
$$\nu_{\alpha,\beta}=\frac{\ch^{\beta}_2-\frac{1}{2}\alpha^2\ch^{\beta}_0}{\ch^{\beta}_1}=\frac{\ch_2-\beta\ch_1+\frac{1}{2}(\beta^2-\alpha^2)\ch_0}{\ch_1-\beta\ch_0}.$$

The following lemma is a key observation for us to establish the
tilt-stability of $\mathcal{I}_C$.
\begin{lemma}\label{lemma1}
Let $E$ be the $\nu_{\alpha,\beta}$-maximal subobject of
$\mathcal{I}_C\in \Coh^{\beta}(\mathbb{P}^3)$ for some
$(\alpha,\beta)\in \mathbb{R}_{>0}\times \mathbb{R}$. If
$2\alpha^2+\beta^2\geq4d$, then $\ch_0(E)=1$.
\end{lemma}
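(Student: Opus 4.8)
The plan is to argue by contradiction using the Harder--Narasimhan property and the Bogomolov--Gieseker inequality of Theorem~\ref{2.6}. Suppose $E$ is the $\nu_{\alpha,\beta}$-maximal subobject of $\mathcal{I}_C$ and that $\ch_0(E)\neq 1$. Since $\mathcal{I}_C\in\Coh^\beta(\mathbb{P}^3)$ sits in the short exact sequence $0\to\mathcal{I}_C\to\mathcal{O}_{\mathbb{P}^3}\to\mathcal{O}_C\to 0$ in $\Coh(\mathbb{P}^3)$, one first needs to pin down for which $\beta$ the sheaf $\mathcal{I}_C$ actually lies in the heart; the relevant range forces $\beta<0$ (so that $\mathcal{O}_{\mathbb{P}^3}\in\mathcal{T}_\beta$ and $\mathcal{O}_C$, being torsion, is in $\mathcal{T}_\beta$ as well, whence $\mathcal{I}_C\in\mathcal{T}_\beta\subset\Coh^\beta$). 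In particular $E\hookrightarrow\mathcal{I}_C$ is an inclusion of sheaves, $E$ is torsion-free of rank $r=\ch_0(E)\geq 1$, and $r\neq 1$ means $r\geq 2$.

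Next I would extract numerical constraints on $E$. Because $E$ is the $\nu_{\alpha,\beta}$-maximal subobject, it is $\nu_{\alpha,\beta}$-semistable, so Theorem~\ref{2.6} gives $\overline{\Delta}^\beta_H(E)=(\ch_1(E))^2-2\ch_0(E)\ch_2(E)\geq 0$. Also $E\subset\mathcal{I}_C\subset\mathcal{O}_{\mathbb{P}^3}$ as sheaves, so $\ch_1(E)\leq 0$; write $\ch_1(E)=-a$ with $a\geq 0$, and in fact $a\geq 1$ once $r\geq 2$ since a rank-$r$ subsheaf of $\mathcal{O}^{\oplus 1}$... — more carefully, a rank-$r$ torsion-free subsheaf $E$ of $\mathcal{I}_C$ satisfies $r\leq 1$ unless one passes to the reflexive hull; the correct statement is that $E^{\vee\vee}$ embeds in $\mathcal{O}_{\mathbb{P}^3}^{\oplus ?}$ only after more care, so instead I would use that $E$ being a subobject in $\Coh^\beta$ with $\ch_0(E)=r$ and the quotient $\mathcal{I}_C/E$ also lying in $\Coh^\beta$ forces, via the defining inequality of tilt-stability at the specific slope where $\mathcal{I}_C$ is being tested, a lower bound $\nu_{\alpha,\beta}(E)\geq\nu_{\alpha,\beta}(\mathcal{I}_C)$. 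Spelling this out in coordinates and combining with $\overline{\Delta}^\beta_H(E)\geq 0$ yields an inequality purely in $r=\ch_0(E)$, $\ch_1(E)$, $\alpha$, $\beta$ and the numerical invariants $\ch_1(\mathcal{I}_C)=0$, $\ch_2(\mathcal{I}_C)=-d$.

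The heart of the argument is then to show that this system is inconsistent once $2\alpha^2+\beta^2\geq 4d$ and $r\geq 2$. Concretely: from $\nu_{\alpha,\beta}(E)\geq\nu_{\alpha,\beta}(\mathcal{I}_C)$, write out both sides using $\ch^\beta$ twisted invariants; the denominator $\ch_1^\beta(\mathcal{I}_C)=-r_{\mathcal I}\beta=-\beta>0$ (since rank of $\mathcal I_C$ is $1$), and the numerator involves $-d+\tfrac12(\beta^2-\alpha^2)$. For $E$, the denominator is $\ch_1(E)-r\beta = -a - r\beta >0$, and I would use the BG inequality $a^2\geq 2r\ch_2(E)$, i.e. $\ch_2(E)\leq a^2/(2r)$, to bound the numerator of $\nu_{\alpha,\beta}(E)$ from above. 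Feeding these into $\nu_{\alpha,\beta}(E)\geq\nu_{\alpha,\beta}(\mathcal{I}_C)$ produces, after clearing the (positive) denominators, a polynomial inequality in $a$ (for fixed $r\geq 2$, $\alpha$, $\beta$). Optimizing over the allowed integer values of $a$ — the extreme cases $a\in\{0,1\}$ being the ones to check, together with a monotonicity argument in $a$ — I expect to reach a contradiction precisely when $2\alpha^2+\beta^2\geq 4d$; the threshold $4d$ should drop out of completing the square in the resulting quadratic. The main obstacle I anticipate is the bookkeeping needed to guarantee that $E$ is genuinely a subsheaf (not just a subobject in the heart) so that $\ch_1(E)\leq 0$ and that the denominators have the claimed signs throughout the region where $\mathcal{I}_C\in\Coh^\beta$; once the signs are controlled, the inequality chase is routine algebra driven by Theorem~\ref{2.6}.
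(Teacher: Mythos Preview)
Your overall strategy---combine the Bogomolov--Gieseker inequality on the semistable object $E$ with the slope comparison $\nu_{\alpha,\beta}(E)>\nu_{\alpha,\beta}(\mathcal{I}_C)$---is exactly what the paper does. The execution, however, has a real gap.

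First, the claim $\ch_1(E)\le 0$ is not justified (and you rightly sense this). The subobject $E$ of $\mathcal{I}_C$ in $\Coh^\beta(\mathbb{P}^3)$ is indeed a sheaf (since $H^{-1}(\mathcal{I}_C)=0$), but the induced sheaf map $E\to\mathcal{I}_C$ need not be injective: its kernel is $H^{-1}(Q)\in\mathcal{F}_\beta$, which may have positive rank. So $E$ is \emph{not} a subsheaf of $\mathcal{O}_{\mathbb{P}^3}$, and there is no reason for $\ch_1(E)\le 0$.

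Second---and this is the substantive point---your plan to ``optimize over the allowed integer values of $a$, the extreme cases $a\in\{0,1\}$'' has no basis: nothing forces $a=-\ch_1(E)$ to be small or even nonnegative. The paper replaces this step by working with $x:=\ch_1^\beta(E)$ rather than $\ch_1(E)$. Since both $E$ and $\mathcal{I}_C/E$ lie in $\Coh^\beta$, one gets for free the sharp bound
\[
0<x\le \ch_1^\beta(\mathcal{I}_C)=-\beta.
\]
The BG inequality reads $\ch_2^\beta(E)\le x^2/(2r)$; feeding this into $\nu_{\alpha,\beta}(E)>\nu_{\alpha,\beta}(\mathcal{I}_C)$ and clearing the positive denominators yields a quadratic inequality in $r=\ch_0(E)$ (not in $a$), whose positive root, after using $x\le -\beta$, is strictly less than $2$ precisely when $2\alpha^2+\beta^2\ge 4d$. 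So the missing ingredient is the a priori bound $\ch_1^\beta(E)\le -\beta$ coming from the heart structure, and the correct variable to isolate is $r$, not $\ch_1(E)$.
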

\begin{proof}
By the long exact sequence of cohomology sheaves induced by the
short exact sequence
$$0\rightarrow E\rightarrow \mathcal{I}_C\rightarrow
Q\rightarrow0$$ in $\Coh^{\beta}(\mathbb{P}^3)$, one sees that $E$
is a torsion free sheaf with $\ch_0(E)\geq1$. If $\mathcal{I}_C$ is
$\nu_{\alpha,\beta}$-semistable, then $E=\mathcal{I}_C$ by our
definition. Hence $\ch_0(E)=1$.

Now we assume that $\mathcal{I}_C$ is not
$\nu_{\alpha,\beta}$-semistable. One deduces

$$\nu_{\alpha,\beta}(E)=\frac{\ch^{\beta}_2(E)-\frac{1}{2}\alpha^2\ch_0(E)}{\ch^{\beta}_1(E)}
>\nu_{\alpha,\beta}(\mathcal{I}_C)=\frac{\frac{1}{2}(\beta^2-\alpha^2)-d}{-\beta},$$

i.e.,
\begin{equation}\label{eq3.1}
\ch^{\beta}_2(E)>\frac{\frac{1}{2}(\beta^2-\alpha^2)-d}{-\beta}\ch^{\beta}_1(E)+\frac{1}{2}\alpha^2\ch_0(E).
\end{equation}
By Theorem \ref{2.6}, we obtain
\begin{equation}\label{eq3.2}
\frac{\left(\ch^{\beta}_1(E)\right)^2}{2\ch_0(E)}\geq\ch^{\beta}_2(E).
\end{equation}
Combining (\ref{eq3.1}) and (\ref{eq3.2}), one sees that
$$\alpha^2\left(\ch_0(E)\right)^2+\frac{\beta^2-\alpha^2-2d}{-\beta}\ch^{\beta}_1(E)\ch_0(E)-\left(\ch^{\beta}_1(E)\right)^2<0.$$
This implies
\begin{equation}\label{eq3.3}
\ch_0(E)<\left(\frac{\beta^2-\alpha^2-2d}{\beta}+\sqrt{\Big(\frac{\beta^2-\alpha^2-2d}{\beta}\Big)^2+4\alpha^2}\right)\frac{\ch^{\beta}_1(E)}{2\alpha^2}.
\end{equation}
Since $E$ is a subobject of $\mathcal{I}_C$ in
$\Coh^{\beta}(\mathbb{P}^3)$, by the definition of
$\Coh^{\beta}(\mathbb{P}^3)$, we deduce that
$$0<\ch^{\beta}_1(E)\leq\ch^{\beta}_1(\mathcal{I}_C)=-\beta.$$
From (\ref{eq3.3}), it follows that
\begin{equation}\label{eq3.4}
\ch_0(E)<\frac{(\alpha^2-\beta^2+2d)+\sqrt{(\beta^2-\alpha^2-2d)^2+4\alpha^2\beta^2}}{2\alpha^2}.
\end{equation}
On the other hand, since $2\alpha^2+\beta^2\geq4d$, a direct
computation shows
$$\frac{(\alpha^2-\beta^2+2d)+\sqrt{(\beta^2-\alpha^2-2d)^2+4\alpha^2\beta^2}}{2\alpha^2}\leq2.$$
Therefore, by (\ref{eq3.4}), we conclude that $\ch_0(E)<2$, i.e.,
$\ch_0(E)=1$.
\end{proof}

We now compute the walls of $\mathcal{I}_C$. See \cite{Maci} for the
surface case.
\begin{lemma}\label{lemma2}
Let $E$ be a subobject of $\mathcal{I}_C$ in
$\Coh^{\beta}(\mathbb{P}^3)$ with $$(\ch_0(E), \ch_1(E),
\ch_2(E))=(r, \theta, c).$$ Then
$\nu_{\alpha,\beta}(E)\leq(<)\nu_{\alpha,\beta}(\mathcal{I}_C)$ if
and only if
$$\frac{\theta}{2}(\alpha^2+\beta^2)-(c+rd)\beta+\theta d\leq(<)0.$$
\end{lemma}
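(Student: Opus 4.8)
The plan is to reduce the assertion to the positivity of the two denominators appearing in the tilted slopes, and then to clear denominators and simplify.

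First I would check that $\ch_1^{\beta}(E)$ and $\ch_1^{\beta}(\mathcal{I}_C)$ are both strictly positive. The sheaf $\mathcal{I}_C$ is torsion-free of rank one, hence $\mu_{\beta}$-stable, so it can belong to $\Coh^{\beta}(\mathbb{P}^3)$ only if it lies in $\mathcal{T}_{\beta}$, i.e. only if $\mu_{\beta}(\mathcal{I}_C)=-\beta>0$; thus $\ch_1^{\beta}(\mathcal{I}_C)=-\beta>0$. For $E$, exactly as in the proof of Lemma~\ref{lemma1} the long exact sequence of cohomology sheaves attached to a cokernel sequence $0\to E\to\mathcal{I}_C\to Q\to 0$ in $\Coh^{\beta}(\mathbb{P}^3)$ shows that $E$ is a torsion-free sheaf lying in $\mathcal{T}_{\beta}$ with $\ch_0(E)=r\geq 1$. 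Since $E\in\mathcal{T}_{\beta}$ we have $\mu_{\beta}^-(E)>0$, and since $E$ is torsion-free $\mu_{\beta}(E)\geq\mu_{\beta}^-(E)$; hence $\ch_1^{\beta}(E)=\theta-\beta r=r\,\mu_{\beta}(E)>0$.

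Next I would write the two tilted slopes explicitly using the formula for $\nu_{\alpha,\beta}$ recalled above. Since $(\ch_0,\ch_1,\ch_2)(E)=(r,\theta,c)$ and $(\ch_0,\ch_1,\ch_2)(\mathcal{I}_C)=(1,0,-d)$, one has
$$\nu_{\alpha,\beta}(E)=\frac{c-\beta\theta+\frac{1}{2}(\beta^2-\alpha^2)r}{\theta-\beta r},\qquad \nu_{\alpha,\beta}(\mathcal{I}_C)=\frac{\frac{1}{2}(\beta^2-\alpha^2)-d}{-\beta}.$$
Because $\theta-\beta r>0$ and $-\beta>0$, the inequality $\nu_{\alpha,\beta}(E)\leq\nu_{\alpha,\beta}(\mathcal{I}_C)$ is equivalent to
$$-\beta\Big(c-\beta\theta+\frac{1}{2}(\beta^2-\alpha^2)r\Big)\leq\Big(\frac{1}{2}(\beta^2-\alpha^2)-d\Big)(\theta-\beta r).$$

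Finally I would expand both sides: the monomials $-\frac{1}{2}\beta^3 r$ and $\frac{1}{2}\alpha^2\beta r$ occur on each side and cancel, and collecting the remaining terms yields precisely
$$\frac{\theta}{2}(\alpha^2+\beta^2)-(c+rd)\beta+\theta d\leq 0.$$
Running the same computation with every $\leq$ replaced by $<$ gives the strict version, so the two statements of the lemma are equivalent. The only delicate point is the positivity of the denominator $\ch_1^{\beta}(E)=\theta-\beta r$, which is what legitimizes clearing denominators without reversing the inequality; this is settled in the first step, and apart from it the argument is a purely formal manipulation.
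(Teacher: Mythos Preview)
Your proof is correct and follows essentially the same route as the paper: establish that both denominators $\ch_1^{\beta}(E)=\theta-\beta r$ and $\ch_1^{\beta}(\mathcal{I}_C)=-\beta$ are strictly positive, clear denominators, and simplify. The paper obtains the positivity of $\ch_1^{\beta}(E)$ more tersely from the general inequality $0<\ch_1^{\beta}(E)\leq\ch_1^{\beta}(\mathcal{I}_C)$ for subobjects in $\Coh^{\beta}$, whereas you unpack it via the torsion-pair membership and slope inequalities, but the substance is identical.
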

\begin{proof}
Since $E$ is a subobject of $\mathcal{I}_C$ in
$\Coh^{\beta}(\mathbb{P}^3)$, one has
$$0<\ch^{\beta}_1(E)=\theta-r\beta\leq
\ch^{\beta}_1(\mathcal{I}_C)=-\beta,$$ i.e.,
$r\beta<\theta\leq(r-1)\beta\leq0$.

Hence
$$\nu_{\alpha,\beta}(E)=\frac{\frac{r}{2}(\beta^2-\alpha^2)-\beta\theta+c}{\theta-r\beta}
\leq(<)\nu_{\alpha,\beta}(\mathcal{I}_C)=\frac{\frac{1}{2}(\beta^2-\alpha^2)-d}{-\beta}$$
is equivalent to
$$-\beta\left(\frac{r}{2}(\beta^2-\alpha^2)-\beta\theta+c\right)\leq(<)(\theta-r\beta)(\frac{1}{2}(\beta^2-\alpha^2)-d),$$
i.e.,$$\frac{\theta}{2}(\alpha^2+\beta^2)-(c+rd)\beta+\theta
d\leq(<)0.$$
\end{proof}

\begin{proposition}\label{prop1}
If $k^2<d$, then $\mathcal{I}_C$ is $\nu_{\alpha, \beta}$-semistable
for any $\alpha>0$ and $\beta=-\frac{2d}{k}$.
\end{proposition}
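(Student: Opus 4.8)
The plan is to argue by contradiction, using Lemma~\ref{lemma1} to reduce to rank one and Lemma~\ref{lemma2} to rule out any wall. Suppose $\mathcal{I}_C$ fails to be $\nu_{\alpha,\beta}$-semistable for some $\alpha>0$ with $\beta=-\tfrac{2d}{k}$, and let $E$ be its $\nu_{\alpha,\beta}$-maximal subobject, so $E\subsetneq\mathcal{I}_C$ and $\nu_{\alpha,\beta}(E)>\nu_{\alpha,\beta}(\mathcal{I}_C)$. First I would observe that $\beta^2=\tfrac{4d^2}{k^2}>4d$ since $k^2<d$, hence $2\alpha^2+\beta^2>4d$, so Lemma~\ref{lemma1} gives $\ch_0(E)=1$; as in the proof of that lemma, $E$ is then an honest rank-one torsion-free subsheaf of $\mathcal{I}_C\subset\mathcal{O}_{\mathbb{P}^3}$.

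Next I would make the structure of $E$ explicit. Its reflexive hull is a line bundle, and $E^{**}\hookrightarrow\mathcal{I}_C^{**}=\mathcal{O}_{\mathbb{P}^3}$ forces $E^{**}\cong\mathcal{O}_{\mathbb{P}^3}(-a)$ for an integer $a\geq 0$, so $E\cong\mathcal{O}_{\mathbb{P}^3}(-a)\otimes\mathcal{I}_Z$ for a closed subscheme $Z\subset\mathbb{P}^3$ of codimension $\geq 2$; writing $e\geq 0$ for the degree of the one-dimensional part of $Z$, a direct Chern character computation gives $(\ch_0(E),\ch_1(E),\ch_2(E))=(1,-a,\tfrac{a^2}{2}-e)$. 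The inclusion $E\hookrightarrow\mathcal{O}_{\mathbb{P}^3}$ is, on reflexive hulls, multiplication by a nonzero section $f\in H^0(\mathcal{O}_{\mathbb{P}^3}(a))$, and a short diagram chase with the biduality map identifies the induced inclusion with $f\cdot\mathcal{I}_Z\subseteq\mathcal{I}_C$ inside $\mathcal{O}_{\mathbb{P}^3}$. Finally, exactly as in the proof of Lemma~\ref{lemma2}, one has $0<\ch^{\beta}_1(E)=-a-\beta\leq-\beta$, hence $0\leq a<\tfrac{2d}{k}$.

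The crux --- and the step I expect to be the main obstacle --- is to extract a numerical consequence from the hypothesis that $C$ lies on no surface of degree $<k$, via a dichotomy at the generic point $\eta$ of $C$ (at which $\mathcal{O}_{\mathbb{P}^3,\eta}$ is a two-dimensional regular local ring with maximal ideal $\mathcal{I}_{C,\eta}$). If $f$ vanishes along $C$, then $V(f)$ is a surface of degree $a$ containing $C$, so $a\geq k$. If $f$ does not vanish along $C$, then $f_\eta$ is a unit in $\mathcal{O}_{\mathbb{P}^3,\eta}$, so localizing $f\cdot\mathcal{I}_Z\subseteq\mathcal{I}_C$ at $\eta$ yields $\mathcal{I}_{Z,\eta}\subseteq\mathcal{I}_{C,\eta}\subsetneq\mathcal{O}_{\mathbb{P}^3,\eta}$, whence $C\subseteq Z$ and $e\geq\deg C=d$. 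In particular, whenever $a<k$ the second alternative necessarily holds, so $e\geq d$. Making the identification $f\cdot\mathcal{I}_Z\subseteq\mathcal{I}_C$ precise and carrying out this local analysis are the genuinely nontrivial points; the remainder is bookkeeping.

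To conclude, I would substitute $(r,\theta,c)=(1,-a,\tfrac{a^2}{2}-e)$ and $\beta=-\tfrac{2d}{k}$ into Lemma~\ref{lemma2}: the inequality $\nu_{\alpha,\beta}(E)\leq\nu_{\alpha,\beta}(\mathcal{I}_C)$ becomes equivalent to
\[
\frac{2ed}{k}+\frac{a\alpha^2}{2}\ \geq\ \frac{d}{k}\,(a-k)\Big(a-\frac{2d}{k}\Big).
\]
If $a\geq k$, then $a<\tfrac{2d}{k}$ makes the right-hand side $\leq 0$, so the inequality holds. If $a<k$, then $e\geq d$ by the dichotomy, so the left-hand side is $\geq\tfrac{2d^2}{k}$, while the right-hand side equals $\tfrac{d}{k}(k-a)\big(\tfrac{2d}{k}-a\big)\leq\tfrac{d}{k}\cdot k\cdot\tfrac{2d}{k}=\tfrac{2d^2}{k}$ (since $a\geq 0$, the two factors are $\leq k$ and $\leq\tfrac{2d}{k}$); so the inequality holds here too. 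In either case $\nu_{\alpha,\beta}(E)\leq\nu_{\alpha,\beta}(\mathcal{I}_C)$, contradicting the choice of $E$. Therefore $\mathcal{I}_C$ is $\nu_{\alpha,\beta}$-semistable.
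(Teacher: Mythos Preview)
Your argument is correct and follows essentially the same route as the paper's proof: you invoke Lemma~\ref{lemma1} (via $\beta^2=4d^2/k^2>4d$) to force $\ch_0(E)=1$, identify $E$ as a twisted ideal sheaf $\mathcal{I}_Z(-a)$, split into the two cases $a\geq k$ versus $C\subseteq Z$ (equivalently, the paper's $C\nsubseteq W$ versus $C\subseteq W$), and then verify the wall inequality from Lemma~\ref{lemma2} in each case. Your treatment is in fact more explicit than the paper's at the one delicate point---identifying the inclusion $E\hookrightarrow\mathcal{I}_C$ with multiplication by a section $f\in H^0(\mathcal{O}_{\mathbb{P}^3}(a))$ and running the localization at the generic point of $C$---which the paper compresses into the single line ``$\mathcal{I}_W(-l)\subset\mathcal{I}_C$ implies $\mathcal{O}_{\mathbb{P}^3}(-l)\subset\mathcal{I}_C$.''
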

\begin{proof}
We let $\alpha_0$ be an arbitrary positive real number,
$\beta_0=-\frac{2d}{k}$, and let $E$ be the
$\nu_{\alpha_0,\beta_0}$-maximal subobject of
$\mathcal{I}_C\in\Coh^{\beta_0}(\mathbb{P}^3)$.

Since $k^2<d$, one sees that $2\alpha_0^2+\beta_0^2>\beta_0^2>4d$.
Hence, by Lemma \ref{lemma1}, one has $\ch_0(E)=1$, and $E$ is
subsheaf of $\mathcal{I}_C$. We can write $E=\mathcal{I}_W(-l)$,
where $W\subset \mathbb{P}^3$ is a scheme of dimension $\leq 1$ and
$l\geq0$. The Chern characters of $\mathcal{I}_W(-l)$ are
$$(\ch_0(\mathcal{I}_W(-l)),\ch_1(\mathcal{I}_W(-l)),\ch_2(\mathcal{I}_W(-l)))=(1, -l,\frac{1}{2}l^2+\ch_2(\mathcal{I}_W)).$$
Since $\mathcal{I}_W(-l)$ is a subobject of $\mathcal{I}_C$ in
$\Coh^{\beta_0}(\mathbb{P}^3)$, one deduce
$$0<\ch^{\beta_0}_1(\mathcal{I}_W(-l))=-l-\beta_0\leq
\ch^{\beta_0}_1(\mathcal{I}_C)=-\beta_0,$$ i.e.,
\begin{equation}\label{eq3.5}
0\leq l<-\beta_0.
\end{equation}

If $C\subseteq W$, then $\ch_2(\mathcal{I}_W)\leq
\ch_2(\mathcal{I}_C)=-d$. Thus one sees that

\begin{eqnarray*}
\frac{-l}{2}(\alpha_0^2+\beta_0^2)-(\frac{1}{2}l^2+\ch_2(I_W)+d)\beta_0-ld&\leq&\frac{-l}{2}\beta_0^2-(\frac{1}{2}l^2-d+d)\beta_0\\
&=&\frac{-\beta_0l}{2}(l+\beta_0)\\
&\leq&0.
\end{eqnarray*}
By Lemma \ref{lemma2}, we conclude that
$\nu_{\alpha_0,\beta_0}(\mathcal{I}_W(-l))\leq\nu_{\alpha_0,\beta_0}(\mathcal{I}_C)$.
Therefore the $\nu_{\alpha_0,\beta_0}$-maximal subobject of
$\mathcal{I}_C$ in $\Coh^{\beta_0}(\mathbb{P}^3)$ is $\mathcal{I}_C$
itself. Namely, $\mathcal{I}_C$ is $\nu_{\alpha_0,
\beta_0}$-semistable.

If $C\nsubseteq W$, then $\mathcal{I}_W(-l)\subset\mathcal{I}_C$
implies $\mathcal{O}_{\mathbb{P}^3}(-l)\subset\mathcal{I}_C$. Thus
$l\geq k$. One deduces by (\ref{eq3.5}) that
\begin{eqnarray}\label{eq3.6}
\frac{-l}{2}(\alpha_0^2+\beta_0^2)-(\frac{1}{2}l^2+\ch_2(I_W)+d)\beta_0-ld&<&\frac{-l}{2}\beta_0^2-(\frac{1}{2}l^2+d)\beta_0-ld
\nonumber
\\
&=&\frac{-l}{2}\left(\beta^2_0+(l+\frac{2d}{l})\beta_0+2d\right)\nonumber
\\
&=&\frac{-l}{2}(\beta_0+l)(\beta_0+\frac{2d}{l})\\
&=&\frac{-l}{2}(\beta_0+l)(\frac{2d}{l}-\frac{2d}{k})\nonumber
\\
&\leq&\nonumber 0.
\end{eqnarray}
From Lemma \ref{lemma2}, it follows that $\mathcal{I}_C$ is also
$\nu_{\alpha_0, \beta_0}$-semistable in this case.
\end{proof}

\begin{proposition}\label{prop2}
If $k^2\geq d$, then $\mathcal{I}_C$ is $\nu_{\alpha,
\beta}$-semistable for any $\alpha>0$ and $\beta=-2\sqrt{d}$.
\end{proposition}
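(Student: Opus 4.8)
The plan is to run the same argument as in Proposition \ref{prop1}, with the destabilizing test now carried out at $\beta_0=-2\sqrt{d}$. Fix an arbitrary $\alpha_0>0$, set $\beta_0=-2\sqrt{d}$, and let $E$ be the $\nu_{\alpha_0,\beta_0}$-maximal subobject of $\mathcal{I}_C\in\Coh^{\beta_0}(\mathbb{P}^3)$. The point of this choice is that $\beta_0^2=4d$, so $2\alpha_0^2+\beta_0^2=2\alpha_0^2+4d>4d$ for every $\alpha_0>0$; hence Lemma \ref{lemma1} applies uniformly and gives $\ch_0(E)=1$. Exactly as in Proposition \ref{prop1} (via the cohomology sheaf sequence of Lemma \ref{lemma1}) the object $E$ is then a rank-one subsheaf of $\mathcal{I}_C$, so $E=\mathcal{I}_W(-l)$ for a closed subscheme $W\subset\mathbb{P}^3$ with $\dim W\le 1$ and an integer $l\ge 0$, with Chern characters $\bigl(1,-l,\tfrac{1}{2}l^2+\ch_2(\mathcal{I}_W)\bigr)$. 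From $0<\ch_1^{\beta_0}(E)=-l-\beta_0\le\ch_1^{\beta_0}(\mathcal{I}_C)=-\beta_0$ we get $0\le l<-\beta_0=2\sqrt{d}$.

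Next I would split into the two cases of Proposition \ref{prop1} and apply the numerical criterion of Lemma \ref{lemma2} with $(r,\theta,c)=(1,-l,\tfrac12 l^2+\ch_2(\mathcal{I}_W))$, i.e.\ verify
$$\frac{-l}{2}(\alpha_0^2+\beta_0^2)-\Bigl(\tfrac{1}{2}l^2+\ch_2(\mathcal{I}_W)+d\Bigr)\beta_0-ld\ \le\ 0 .$$
If $C\subseteq W$, then $\ch_2(\mathcal{I}_W)\le\ch_2(\mathcal{I}_C)=-d$; discarding $\tfrac{-l}{2}\alpha_0^2\le 0$ and $-ld\le 0$ and using $\ch_2(\mathcal{I}_W)+d\le 0$ with $\beta_0<0$, the left-hand side is at most $\tfrac{-l}{2}\beta_0^2-\tfrac12 l^2\beta_0=\tfrac{-l\beta_0}{2}(l+\beta_0)\le 0$, since $-l\beta_0\ge 0$ and $l+\beta_0=l-2\sqrt{d}<0$. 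If $C\nsubseteq W$, then as in Proposition \ref{prop1} the inclusion $\mathcal{I}_W(-l)\subset\mathcal{I}_C$ forces $\mathcal{O}_{\mathbb{P}^3}(-l)\subset\mathcal{I}_C$, i.e.\ a surface of degree $l$ containing $C$, so $l\ge k$; and here $k^2\ge d$ gives $l\ge k\ge\sqrt{d}$, hence $\sqrt{d}\le l<2\sqrt{d}$. Using $\ch_2(\mathcal{I}_W)\le 0$ and discarding $\tfrac{-l}{2}\alpha_0^2$, the left-hand side is at most
$$\frac{-l}{2}\beta_0^2-\Bigl(\tfrac{1}{2}l^2+d\Bigr)\beta_0-ld\ =\ \sqrt{d}\,(l-\sqrt{d})(l-2\sqrt{d})\ \le\ 0 ,$$
the last inequality because $l-\sqrt{d}\ge 0$ and $l-2\sqrt{d}<0$. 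In either case Lemma \ref{lemma2} yields $\nu_{\alpha_0,\beta_0}(E)\le\nu_{\alpha_0,\beta_0}(\mathcal{I}_C)$, so the maximal subobject $E$ does not destabilize $\mathcal{I}_C$; as $E$ has the largest tilted slope among all subobjects of $\mathcal{I}_C$, this means $\mathcal{I}_C$ is $\nu_{\alpha_0,\beta_0}$-semistable.

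I do not expect a serious obstacle: the proof is structurally identical to Proposition \ref{prop1}, and the only care needed is the sign bookkeeping under the substitution $\beta_0=-2\sqrt{d}$ (so $\beta_0<0$ and $\beta_0^2=4d$) and the observation that the relevant quadratic in $l$ factors as $(l-\sqrt{d})(l-2\sqrt{d})$ and is therefore non-positive on the interval $[\sqrt{d},2\sqrt{d})$. What makes this interval nonempty with left endpoint exactly $\sqrt{d}$ — rather than something smaller, which would break the case $C\nsubseteq W$ — is precisely the hypothesis $k^2\ge d$, and this is why the regime $k^2\ge d$ must be handled separately from Proposition \ref{prop1}. Note also that $\alpha_0^2$ is discarded at every step as a non-positive contribution, which is why $\beta_0=-2\sqrt{d}$ works simultaneously for all $\alpha_0>0$; it is in fact the smallest value of $|\beta_0|$ for which the hypothesis $2\alpha_0^2+\beta_0^2\ge 4d$ of Lemma \ref{lemma1} holds in the limit $\alpha_0\to 0^+$.
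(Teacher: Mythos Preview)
Your proof is correct and follows essentially the same route as the paper's: both invoke Lemma~\ref{lemma1} at $\beta_0=-2\sqrt{d}$ to force $\ch_0(E)=1$, write $E=\mathcal{I}_W(-l)$, split into the cases $C\subseteq W$ and $C\nsubseteq W$, and check Lemma~\ref{lemma2}'s numerical criterion. The only cosmetic difference is that in the case $C\nsubseteq W$ the paper reuses the factorization $\tfrac{-l}{2}(\beta_0+l)(\beta_0+\tfrac{2d}{l})$ from \eqref{eq3.6} and then bounds $\tfrac{2d}{l}\le\tfrac{2d}{k}\le 2\sqrt{d}$, whereas you substitute $\beta_0=-2\sqrt{d}$ directly to get the equivalent factorization $\sqrt{d}\,(l-\sqrt{d})(l-2\sqrt{d})$.
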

\begin{proof}
The proof is almost the same as that of Proposition \ref{prop1}. We
let $\alpha_0$ be an arbitrary positive real number,
$\beta_0=-2\sqrt{d}$, and let $E$ be the
$\nu_{\alpha_0,\beta_0}$-maximal subobject of
$\mathcal{I}_C\in\Coh^{\beta_0}(\mathbb{P}^3)$.

By Lemma \ref{lemma1}, the assumption $\beta_0=-2\sqrt{d}$ makes
sure that $\ch_0(E)=1$. We can still write $E=\mathcal{I}_W(-l)$ as
in the proof of Proposition \ref{prop1}. When $C\subseteq W$, the
same proof of Proposition \ref{prop1} shows that $\mathcal{I}_C$ is
$\nu_{\alpha_0, \beta_0}$-semistable.

In the case of $C\nsubseteq W$, one sees that $l\geq k$. Thus it
follows from (\ref{eq3.6}) and (\ref{eq3.5}) that
\begin{eqnarray*}
\frac{-l}{2}(\alpha_0^2+\beta_0^2)-(\frac{1}{2}l^2+\ch_2(I_W)+d)\beta_0-ld&<&
\frac{-l}{2}(\beta_0+l)(\beta_0+\frac{2d}{l})\\
&\leq&\frac{-l}{2}(\beta_0+l)(\frac{2d}{k}-2\sqrt{d}).
\end{eqnarray*}
The assumption $k^2\geq d$ guarantees that the left hand side of the
above inequality is negative. Therefore we are done by Lemma
\ref{lemma2}.
\end{proof}

\section{The proof of the main theorems}\label{S4}
Now we can prove Theorem \ref{main} and \ref{main2} easily.
\begin{proof}[Proof of Theorem \ref{main}]
Since $C$ is an integral curve, one sees that
$$\ch^{\beta}_3(\mathcal{I}_C)=-\frac{1}{6}\beta^3+d\beta+2d-\chi(\mathcal{O}_C).$$
If $\mathcal{I}_C$ is $\nu_{\alpha,\beta}$-semistable, then from
Theorem \ref{*}, it follows that
\begin{eqnarray*}
&&\alpha^2\overline{\Delta}^{\beta}_H(\mathcal{I}_C)+4\left(H\ch^{\beta}_2(\mathcal{I}_C)\right)^2-6H^2\ch^{\beta}_1(\mathcal{I}_C)\ch^{\beta}_3(\mathcal{I}_C)\\
=&&2\alpha^2d+4d^2+\beta^4-4\beta^2d-6(-\beta)\left(-\frac{1}{6}\beta^3+d\beta+2d-\chi(\mathcal{O}_C)\right)\\
=&&2\alpha^2d+4d^2+2\beta^2d+6\beta(2d-\chi(\mathcal{O}_C))\\
\geq&&0,
\end{eqnarray*}
i.e.,
\begin{equation}\label{eq4}
h^1(\mathcal{O}_C)-1=-\chi(\mathcal{O}_C)\leq\frac{2d^2+(\alpha^2+\beta^2)d}{3(-\beta)}-2d.
\end{equation}

By Proposition \ref{prop1} and \ref{prop2}, one can substitute
$(\alpha, \beta)=(0, -\frac{2d}{k})$ and $(\alpha, \beta)=(0,
-2\sqrt{d})$ into (\ref{eq4}) respectively to obtain our desired
conclusion.
\end{proof}

\begin{proof}[Proof of Theorem \ref{main2}]
The short exact sequence
$$0\rightarrow \mathcal{I}_C(m)\rightarrow \mathcal{O}_{\mathbb{P}^3}(m)\rightarrow
\mathcal{O}_C(m)\rightarrow0$$ induces a long exact sequence
$$H^1(\mathcal{O}_{\mathbb{P}^3}(m))\rightarrow H^1(\mathcal{O}_C(m))\rightarrow H^2(\mathcal{I}_C(m))\rightarrow H^2(\mathcal{O}_{\mathbb{P}^3}(m)).$$
Since
$H^1(\mathcal{O}_{\mathbb{P}^3}(m))=H^2(\mathcal{O}_{\mathbb{P}^3}(m))=0$,
we deduce $h^2(\mathcal{I}_C(m))=h^1(\mathcal{O}_C(m))$.

Now we assume that
\begin{assumption}\label{ass}
$m>\frac{2d}{k}$, $k^2<d$ and $\beta_0=-\frac{2d}{k}$.
\end{assumption}
One sees that
$$\ch^{\beta_0}_1(\mathcal{O}_{\mathbb{P}^3}(-m))=-m+\frac{2d}{k}<0.$$
Thus
$\mathcal{O}_{\mathbb{P}^3}(-m)[1]\in\Coh^{\beta_0}(\mathbb{P}^3)$.
It turns out that
$$\nu_{\alpha_0,\beta_0}(\mathcal{O}_{\mathbb{P}^3}(-m)[1])=\frac{-\frac{1}{2}(m+\beta_0)^2+\frac{1}{2}\alpha_0^2}{m+\beta_0}
<\nu_{\alpha_0,\beta_0}(\mathcal{I}_C)=\frac{\frac{1}{2}(\beta_0^2-\alpha_0^2)-d}{-\beta_0}$$
is equivalent to
$$-\beta_0\left(-\frac{1}{2}(m+\beta_0)^2+\frac{1}{2}\alpha_0^2\right)<(m+\beta_0)(\frac{1}{2}(\beta_0^2-\alpha_0^2)-d),$$
i.e.,$$\alpha_0^2+\beta_0^2+(m+\frac{2d}{m})\beta_0+2d<0.$$

Assumption \ref{ass} implies
\begin{eqnarray*}
\beta_0^2+(m+\frac{2d}{m})\beta_0+2d&=&(\beta_0+m)(\beta_0+\frac{2d}{m})\\
&=&(\beta_0+m)(\frac{2d}{m}-\frac{2d}{k})\\
&<&(\beta_0+m)(k-\frac{2d}{k})\\
&<&0.
\end{eqnarray*}
Thus we can find an $\alpha_0>0$ such that
$\nu_{\alpha_0,\beta_0}(\mathcal{O}_{\mathbb{P}^3}(-m)[1])<\nu_{\alpha_0,\beta_0}(\mathcal{I}_C)$.
On the other hand, by \cite[Proposition 7.4.1]{BMT} and Proposition
\ref{prop1}, one deduces that $\mathcal{O}_{\mathbb{P}^3}(-m)[1]$
and $\mathcal{I}_C$ are both $\nu_{\alpha_0,\beta_0}$-semistable. We
conclude that $$\Hom_{\D^b(\mathbb{P}^3)}(\mathcal{I}_C,
\mathcal{O}_{\mathbb{P}^3}(-m)[1])=0.$$ By the Serre duality
theorem, one obtains $h^2(\mathcal{I}_C(m-4))=0$. Therefore we
conclude that $h^2(\mathcal{I}_C(l))=h^1(\mathcal{O}_C(l))=0$ if
$l>\frac{2d}{k}-4$ and $k^2<d$.

Similarly, one can show
$h^2(\mathcal{I}_C(l))=h^1(\mathcal{O}_C(l))=0$ if
 $l>2\sqrt{d}-4$ and $k^2\geq d$.
\end{proof}

\bibliographystyle{amsplain}

\begin{thebibliography}{10}
\bibitem{BBMT}{\it A. Bayer, A. Bertram, E. Macr\`i and Y. Toda}: Bridgeland stability
conditions on threefolds II: An application to Fujita's conjecture.
J. Algebraic Geom. {\bf 23} (2014), 693--710. Zbl 1310.14026,
MR3263665


\bibitem{BMS}{\it A. Bayer, E. Macr\`i and P. Stellari}: Stability conditions on
abelian threefolds and some Calabi-Yau threefolds. Invent. Math.
{\bf 206} (2016), no. 3, 869--933. Zbl 06664764, MR3573975

\bibitem{BMT}{\it A. Bayer, E. Macr\`i and Y. Toda}: Bridgeland stability
conditions on threefolds I: Bogomolov-Gieseker type inequalities. J.
Algebraic Geom. {\bf 23} (2014), 117--163. Zbl 1306.14005, MR3121850


\bibitem{Bri1}{\it T. Bridgeland}: Stability conditions on triangulated categories. Ann. of Math.
{\bf 166} (2007), no. 2, 317--345. Zbl 1137.18008, MR2373143

\bibitem{Cast}{\it G. Castelnuovo}: Sui multipli di una serie lineare di gruppi di punti
appartenente ad una curva algebrica. Rend. Circ. Mat. Palermo {\bf
7} (1893), 89--110.

\bibitem{GP}{\it L. Gruson and C. Peskine}: Genre des courbes de l'espace projectif,
Algebraic Geometry (Troms\o, 1977), Lecture Notes in Math., vol.
687, Springer-Verlag, New York and Berlin, 1978, 31--59.  Zbl
0412.14011, MR0527229


\bibitem{H}
{\it G. Halphen}: M\`emoire sur la classification des courbes
gauches alg\`ebriques. J. Ec. Polyt. {\bf 52} (1882), 1--200.

\bibitem{Har}
{\it J. Harris}: The genus of space curves. Math. Ann. {\bf 249}
(1980), 191--204.  Zbl 0449.14006, MR0579101

\bibitem{Ha}
{\it R. Hartshorne}: On the classification of algebraic spaces
curves II, Proc. Sympos. Pure Math., vol. 46, Amer. Math. Soc.,
Providence, RI, 1987, 145--164. MR0927954

\bibitem{HH}
{\it R. Hartshorne and A. Hirschowitz}: Nouvelles courbes de bon
genre dans l'espace projectif. Math. Ann. {\bf 280} (1988),
353--367. Zbl 0678.14007, MR0936316


\bibitem{HRS}
{\it D. Happel, I. Reiten, and S. Smal\o}: Tilting in abelian
categories and quasitilted algebras. Mem. Amer. Math. Soc. {\bf 120}
(1996). Zbl 0849.16011, MR1327209

\bibitem{Maci}
{\it A. Maciocia}: Computing the walls associated to Bridgeland
stability conditions on projective surfaces. Asian J. Math. {\bf 18}
(2014), no. 2, 263--279. Zbl 1307.14022, MR3217637

\bibitem{Mac2}
{\it E. Macr\`i}: A generalized Bogomolov-Gieseker inequality for
the three-dimensional projective space. Algebra Number Theory {\bf
8} (2014), no. 1, 173--190. Zbl 1308.14016, MR3207582

\bibitem{Toda}
{\it Y. Toda}: Bogomolov-Gieseker type inequality and counting
invariants. J. Topol. {\bf 6} (2013), no. 1, 217--250. Zbl
1328.14087, MR3029426
\end{thebibliography}

\end{document}